\newtheorem{theorem}{Theorem}[section]
\newtheorem{lemma}[theorem]{Lemma}
\newtheorem{corollary}[theorem]{Corollary}
\newtheorem{remark}[theorem]{Remark}
\providecommand{\BC}{{\mathbb{C}}}
\providecommand{\BP}{{\mathbb{P}}}
\providecommand{\BQ}{{\mathbb{Q}}}
\providecommand{\BR}{{\mathbb{R}}}
\providecommand{\BZ}{{\mathbb{Z}}}
\DeclareMathOperator{\rank}{rank}
\DeclareMathOperator{\Pic}{Pic}
\title[Mazur's Conjecture and An Unexpected Rational Curve]{Mazur's Conjecture and An Unexpected Rational Curve on Kummer Surfaces and their Superelliptic Generalisations}
\author{Damián Gvirtz}
\address{Department of Mathematics\\ South Kensington Campus\\ Imperial College London\\ LONDON\\ SW7 2AZ\\ United Kingdom}
\email{d.gvirtz15@imperial.ac.uk}
\date{}
\begin{document}

\baselineskip=17pt

\begin{abstract}
 We prove the following special case of Mazur's conjecture on the topology of rational points. Let $E$ be an elliptic curve over $\BQ$ with $j$-invariant $1728$. For a class of elliptic pencils which are quadratic twists of $E$ by quartic polynomials, the rational points on the projective line with positive rank fibres are dense in the real topology. This extends results obtained by Rohrlich and Kuwata-Wang for quadratic and cubic polynomials.
 
 For the proof, we investigate a highly singular rational curve on the Kummer surface $K$ associated to a product of two elliptic curves over $\BQ$, which previously appeared in publications by Mestre, Kuwata-Wang and Satgé. We produce this curve in a simpler manner by finding algebraic equations which give a direct proof of rationality. We find that the same equations give rise to rational curves on a class of more general surfaces extending the Kummer construction. This leads to further applications apart from Mazur's conjecture, for example the existence of rational points on simultaneous twists of superelliptic curves.
 
 Finally, we give a proof of Mazur's conjecture for the Kummer surface $K$ without any restrictions on the $j$-invariants of the two elliptic curves.
\end{abstract}

\subjclass[2010]{Primary 14J27; Secondary 11G05}

\keywords{superelliptic curve, Kummer surface, twist, Mazur's Conjecture}

\maketitle
\section{Introduction}\label{sec:intro}
In the study of the distribution of rational points on varieties, two methods are frequently used to generate new points from existing ones: One can apply automorphisms defined over the ground field, e.g.\ arising from a group law on an elliptic curve. Or one can look for rational subvarieties that will be guaranteed to have many rational points. Often, a combination of both is needed. The prevalence of these methods is paramount to the whole subject.

A famous, successful example is Elkies' solution to Euler's conjecture on $A^4+B^4+C^4=D^4$ \cite{elkies}. In this paper, we consider another example given by Kuwata and Wang in \cite{kuwata}. Let $A$ be an abelian variety which is the product of two elliptic curves $E_1$ and $E_2$ over $\BQ$. Assume that $E_1$ and $E_2$ do not both have equal $j$-invariants $0$ or $1728$. Let 
\begin{align*}
 E_1&:y_1^2=x^3+ax+b=:g(x)\\
 E_2&:y_2^2=t^3+ct+d=:f(t)
\end{align*}
be affine equations for the elliptic curves in Weierstrass form (in particular $a=b=0$ and $c=d=0$ are excluded). The assumption on the $j$-invariant excludes exactly the cases $a=c=0$ and $b=d=0$. An affine model of the Kummer surface $K$ associated to $A$ is given after setting $y=y_1/y_2$:
\[
K:(t^3+ct+d)y^2=x^3+ax+b.
\]
On this surface, \cite[\S1]{kuwata} constructs a parametric curve $C$ as the scheme-theoretic image of the morphism
\[\sigma:\BP^1\to K,u\mapsto (x,y,t)(u):=\left(\frac{du^6-b}{a-cu^4},u^3,\frac{du^6-b}{u^2(a-cu^4)}\right).\]

Using this curve, one can prove the following theorem:
\begin{theorem}\cite[Theorem 3]{kuwata}\label{thm:mazur-wang}
 The set of rational points on $K$ is dense in the Zariski and real topologies.
\end{theorem}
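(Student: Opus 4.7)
The plan is to view $\pi\colon K \to \BP^1_t$ as the elliptic fibration projecting to the $t$-coordinate (whose generic fibre is a quadratic twist of $E_1$ by $f(t)=t^3+ct+d$), and to use $\sigma$ as a source of non-torsion rational points on infinitely many rational fibres. Combined with the fibrewise group law and Silverman's specialisation theorem, this will supply a dense set of rational points in both the Zariski and the real topologies.

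Because $\pi\circ\sigma = \tau$ is the degree-$6$ map $u\mapsto (du^6-b)/(u^2(a-cu^4))$ rather than the identity, $\sigma$ is a multisection. Base-changing $\pi$ along $\tau$ turns $\sigma$ into a genuine section $\tilde\sigma$ of a new elliptic surface $\tilde\pi\colon\tilde K\to\BP^1_u$, and it suffices to prove that $\tilde\sigma$ is non-torsion. This can be checked either by specialising at a single well-chosen $u_0 \in \BQ$ where the fibre has small rational torsion subgroup (and verifying that $\tilde\sigma(u_0)$ has infinite order by a direct computation), or by computing the N\'eron--Tate height of $\tilde\sigma$ from the pole orders of $x(u) = (du^6-b)/(a-cu^4)$ at the singular fibres of $\tilde\pi$ and checking positivity. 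Granted non-torsion, Silverman's specialisation theorem tells us that the set of $u\in\BQ$ at which $\tilde\sigma(u)$ becomes torsion in its fibre is thin. For the complementary thick set, the fibre $K_{\tau(u)}$ is a smooth elliptic curve of Mordell--Weil rank $\geq 1$ over $\BQ$ on which $\sigma(u)$ is a point of infinite order. Since each such fibre contains a Zariski-dense set of rational points and the values $\tau(u)$ are themselves Zariski-dense in $\BP^1(\BQ)$, Zariski density of $K(\BQ)$ in $K$ is immediate.

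For density in the real topology, let $U\subseteq K(\BR)$ be non-empty open with $V=\pi(U)\subseteq \BR$ open, and use continuity and surjectivity of $\tau$ on the reals to pick an open interval $V'\subseteq\BR$ with $\tau(V')\subseteq V$. For each rational $u_0 \in V'$ outside the thin exceptional set, the multiples $\{n\sigma(u_0)\}_{n\in\BZ}$ are dense in the connected component of $K_{\tau(u_0)}(\BR)$ containing $\sigma(u_0)$, and even in the entire real locus of the fibre when $\sigma(u_0)$ lies off the identity component. As $u_0$ varies continuously over $V'$, these dense circles sweep out a two-dimensional open region $W\subseteq K(\BR)$ lying above $V$. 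The principal obstacle is to ensure that $W$ actually meets the prescribed open set $U$, since the real locus of a fibre may have two distinct components. I would resolve this by a local topological analysis: since $\sigma$ is continuous and the real structure of the fibres varies continuously with $t$, one can perturb $u_0$ inside $V'$ (and, if necessary, replace $\sigma$ by a group-law translate) so that $\sigma(u_0)$ lies on whichever real component of $K_{\tau(u_0)}(\BR)$ meets $U$. Once this is arranged, density of $\BQ$ in $V'$ together with density of $\langle\sigma(u_0)\rangle$ in the relevant component produces a rational point arbitrarily close to any chosen point of $U$.
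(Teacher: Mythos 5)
Your Zariski-density argument is fine and close in spirit to the source: $\sigma$ (via the base change trick you describe, or by arguing directly with Silverman's specialisation theorem on the multisection) yields infinitely many positive-rank fibres over a dense set of $t$-values, and spreading via the fibrewise group law gives Zariski density.

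The real-density part, however, has a genuine gap exactly where you flag the ``principal obstacle.'' If $g(x)$ has three real roots, then $K_t(\BR)$ has two connected components for each real $t$: the identity component and an oval. If $\sigma(u_0)$ lies on the identity component of $K_{t(u_0)}(\BR)$, then \emph{every} multiple $n\sigma(u_0)$ also lies on the identity component; the cyclic group $\langle\sigma(u_0)\rangle$ is dense only in that component and never meets the oval. Your two proposed remedies do not repair this. ``Perturbing $u_0$'' does not help because whether $\sigma(u_0)$ lies on the identity component or the oval is a locally constant condition, so a small perturbation cannot change it, and there is no reason the multisection ever meets the oval for $u$ with $t(u)$ near a prescribed target. ``Replacing $\sigma$ by a group-law translate'' would require another rational section (or multisection) that \emph{does} meet the oval over the relevant $t$-range, and you have not produced one -- the zero section certainly does not serve, and the obvious $2$-torsion sections need not be rational.

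The way Kuwata--Wang (and this paper, in the proofs of \Cref{thm:mazur43} and \Cref{thm:mazur33}) get around this is to exploit a \emph{second} fibration. Instead of spreading points inside a fibre $K_t$, one spreads them inside a fibre of the projection to $x$ (or $y$). The decisive observation is that in that transverse fibration, \emph{both} connected real components of the fibre project surjectively onto the $t$-line (resp.\ one can choose the $y$-fibre to be connected over $\BR$), so density in a single connected component there already gives $t$-values hitting any prescribed interval, and then a further intersection argument with $K_{t'}$ recovers points on the oval of $K_{t'}(\BR)$. Without some version of this second-fibration idea (or an explicit rational point on the oval), the one-fibration approach you sketch cannot close the two-component case.
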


This verifies, for the surface $K$, Mazur's conjecture on the topology of rational points: For any smooth variety $V$ over $\BQ$, if the rational points are Zariski dense in $V$, then their topological closure in the real locus $V(\BR)$ of $V$ is a union of real connected components of $V(\BR)$ \cite{mazur, mazur2}. It has been shown by a concrete counterexample \cite{counter} that Mazur's conjecture does not hold without further assumptions on the variety, although refined versions have been proposed that so far have resisted attempts at disproving them.

The same curve $C$ or rather its preimage $C'$ on $A$ was also independently found by Mestre in \cite{mestre} and used to prove that there are infinitely many elliptic curves over $\BQ$ of rank at least $2$ with a fixed $j$-invariant.

The appearance of $C$ is somewhat surprising and mysterious, given that the construction of $K$ starts with two generic elliptic curves and a priori there is not much reason to expect a rational curve over $\BQ$ on it apart from the obvious ones.

The discovery that prompted the present article is that the curve $C$ found by Mestre and Kuwata-Wang arises from a rather simple equation, which generalises to a wider class of surfaces. The precise statements and applications are contained in Sections \ref{sec:superelliptic}-\ref{sec:generalisation}, containing to the author's knowledge the first known case of Mazur's conjecture dealing with a class of quadratic twists of an elliptic curve by a quartic polynomial in \Cref{thm:mazur43}.

The last section does not utilise the curve $C$ and exhibits a proof of Mazur's conjecture for the Kummer surface $K$ without any assumptions on the $j$-invariants.

For the questions discussed in this article, it is not necessary to have projective models. We will thus mostly work with affine models that yield a dense open subvariety of the respective surface or curve. In our terminology, an affine, not necessarily geometrically irreducible curve has genus $0$ if it has a birational map to a projective curve whose desingularisation has genus $0$. A rational curve will always be an integral genus $0$ curve with a smooth rational point over the ground field.

After the publication of this article, the author was kindly informed by M. Ulas that the curve considered in \Cref{thm:Ckn} had previously been discovered by him \cite[Lem. 2.1]{ulas}.

\section{A Rational Curve on $K$ and Superelliptic Generalisations}\label{sec:superelliptic}

\begin{theorem}\label{thm:Ckn}
 Let $D_1$ and $D_2$ be two superelliptic curves over a field with arbitrary characteristic of the form
 \[D_1:y_1^k=x^n+ax+b\]
 \[D_2:y_2^k=t^n+ct+d,\]
 with $a$ or $b$ nonzero and $c$ or $d$ nonzero.
 The group $\mu_k$ of $k$-th roots of unity acts diagonally on $D_1\times D_2$. Let \[X=(D_1\times D_2)/\mu_k.\] An affine equation of $X$ is given by
 \[(t^n+ct+d)y^k=x^n+ax+b.\]
 Then there exists a genus $0$ curve $C$ on $X$ which is the closure of the subvariety of $X$ cut out by the affine equation
 \[(ct+d)y^k=ax+b.\]
 Moreover, if $a$ and $c$ are not both equal to $0$, $C$ has a rational component. If $k$ and $n$ are coprime, $b\neq 0$ and $a^nd^{n-1}-b^{n-1}c^n\neq 0$, then $C$ is geometrically irreducible. 
\end{theorem}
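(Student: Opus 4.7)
The plan is to exploit the linearity of the extra equation in a decisive way. Subtracting the defining equation of $X$ from the equation $(ct+d)y^k = ax+b$ yields
\[
 t^n y^k = x^n,
\]
so $C$ is cut out in $\mathbb{A}^3$ by the pair of equations $t^n y^k = x^n$ and $(ct+d)y^k = ax+b$; equivalently, $C$ sits on the affine surface $Y := V(t^n y^k - x^n)$.

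When $\gcd(k,n) = 1$, one verifies (e.g.\ via Capelli's theorem applied to $x^n - t^n y^k$ in $\overline{F}(y,t)[x]$, using that $t^n y^k$ is not a $p$-th power in $\overline{F}(y,t)$ for any prime $p\mid n$ because $p\nmid k$) that $Y$ is geometrically irreducible, and it admits the birational parametrization
\[
 \phi\colon \mathbb{A}^2 \to Y,\qquad (u,t)\mapsto (tu^k, u^n, t).
\]
In general, setting $\delta = \gcd(k,n)$, $k=\delta k'$, $n=\delta n'$, the polynomial factors as $\prod_{\zeta^{\delta}=1}(t^{n'}y^{k'} - \zeta x^{n'})$, so $Y$ splits into $\delta$ rational components, each parametrized analogously.

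Substituting $\phi$ into the linear equation produces
\[
 P(u,t) = (cu^{nk} - au^k)\,t + (du^{nk} - b),
\]
which is linear in $t$. Consequently $V(P) \subseteq \mathbb{A}^2$ is set-theoretically the graph of the rational function $t(u) = (b - du^{nk})/(cu^{nk} - au^k)$, together with possible vertical lines at common roots of the two coefficients. Since the graph projects isomorphically onto the $u$-line, every component of $C$ has genus zero. When $(a,c)\neq (0,0)$ the coefficient $A(u):=cu^{nk}-au^k$ is not identically zero and is defined over the base field, so the graph yields an irreducible rational component of $C$ over the ground field (with abundant smooth base-field points from evaluating $u$), establishing the second claim.

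The main technical step is geometric irreducibility. Under the coprimality hypothesis it suffices, since $\phi$ is birational, to show $P(u,t)$ is irreducible over the algebraic closure. Being linear in $t$, this reduces to $\gcd(A(u), B(u)) = 1$ with $A(u) = u^k(cu^{(n-1)k} - a)$ and $B(u) = du^{nk} - b$. The hypothesis $b \neq 0$ disposes of the factor $u^k$ in $A$. A common root $u_0$ of $cu^{(n-1)k} - a$ and $B$ would force $u_0^{(n-1)k} = a/c$ and $u_0^{nk} = b/d$; dividing gives $u_0^k = bc/(ad)$, and raising to the $(n-1)$-th power yields the forbidden identity $a^n d^{n-1} = b^{n-1} c^n$. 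The degenerate cases where $c = 0$ or $d = 0$ reduce to easier gcd computations. Tracking this irreducibility back to $C$ through the birational surjection $\phi$ completes the argument; the care needed in the gcd analysis and in handling the degenerate subcases of the coefficient vanishing is the only real obstacle.
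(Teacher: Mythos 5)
Your proposal is essentially the same as the paper's proof, only organized differently. The paper first passes to the $(r,t)$-plane via $r:=x/t$, obtaining a curve linear in $t$, and then adjoins the equation $y^k=r^n$; you fold these two steps together by directly parametrizing the surface $Y=V(t^n y^k-x^n)$ via $\phi(u,t)=(tu^k,u^n,t)$ — in effect setting $r=u^k$, $y=u^n$ from the outset. Your pullback polynomial $P(u,t)$ is exactly the paper's equation for $\tilde C$ after the substitution $r=u^k$, and your criterion $a^n d^{n-1}\neq b^{n-1}c^n$ emerges by the same gcd computation. Two small imprecisions worth noting: first, your geometric decomposition $\prod_{\zeta^\delta=1}(t^{n'}y^{k'}-\zeta x^{n'})$ of $t^n y^k - x^n$ is wrong in characteristic $p\mid\delta$, where the correct factorization has fewer distinct factors each appearing with multiplicity a power of $p$ (the paper carefully separates the prime-to-$p$ and $p$-primary parts of $\gcd(k,n)$); this does not break the genus-$0$ conclusion because genus depends only on the reduced components, but it should be stated correctly for the claim ``over a field with arbitrary characteristic.'' Second, you assert the existence of a rational component only under $(a,c)\neq(0,0)$, but the theorem's blanket genus-$0$ assertion also requires a word about the degenerate cases $a=c=0$ and $b=d=0$, where $P$ degenerates to a union of vertical lines or the line $t=0$ — still genus $0$, but the paper treats these as a separate case.
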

The condition $a^nd^{n-1}-b^{n-1}c^n\neq 0$ excludes the cases when there is an isomorphism between $D_1$ and $D_2$ that is compatible with the $\mu_k$-action.

For $k=2$ and $n=3$, this recovers Mestre's and Kuwata-Wang's curve on $K$. In this special case, these equations already appear in \cite{satge} (cf. \Cref{rem:satge} below).

\begin{proof}
We derive an alternative affine model of $C$ after which a brief analysis of the geometrically irreducible components yields the desired results. A transformation of the equations for $C$ gives
\begin{align*}
 \frac{ax+b}{ct+d}&=\left(\frac{x}{t}\right)^n,\\
 y^k&=\left(\frac{x}{t}\right)^n.
\end{align*}
Setting $r:=x/t$, the first equation is equivalent to
\[tr(cr^{n-1}-a)=b-dr^n\]
which defines a plane curve $\tilde C$ in the variables $(r,t)$. Note that this equation is linear in $t$. We distinguish three different cases:
\begin{enumerate}
 \item There exists no point $(r_0,t_0)\in \tilde C$ with $r_0(cr_0^{n-1}-a)=0$: In this case \[\pi:\BP^1\to\tilde C:r\mapsto(r,(b-dr^n)/(r(cr^{n-1}-a))\] defines a birational map, hence $\tilde C$ is a rational curve parametrised by $r$.
 \item There exist points $(r_0,t_0)\in \tilde C$ with $r_0(cr_0^{n-1}-a)=0$, and neither $a=c=0$ nor $b=d=0$:  If $r_0=0$, we must have $b=0$. If $cr_0^{n-1}-a=0$, we must have $r_0^nd=b$ and thus $a^nd^{n-1}-b^{n-1}c^n=0$. The map $\pi$ is from above is non-constant and yields a component of $\tilde C$ parametrised by $r$. However, additional components with $r=r_0$ appear, onto which $\pi$ does not map dominantly.
 \item $a=c=0$ or $b=d=0$: If $a=c=0$, then $y^k=r^n=b/d$ and thus $C$ decomposes into components with constant $y$ and $r$. If $b=d=0$, then $C$ has three components cut out by $r^{n-1}=a/c$, $x=y=0$ and $x=t=0$ respectively.
\end{enumerate}

From now on, assume that we are in one of the first two cases and let $\tilde C_1$ be the closure of the image of $\pi$ in $\tilde C$.  Since $C$ is obtained from $\tilde C$ by the affine equation $y^k=r^n$ and $r$ is locally constant outside $\tilde C_1$, we only have to consider $\tilde C_1$.

Let $p$ be the characteristic exponent of the ground field. Let $s$ be the $p$-primary part of the greatest common divisor of $k$ and $n$, so that $k=sp^ik'$ and $n=sp^in'$ where $(k',n')=1$. Then geometrically, $C$ decomposes into components
\[C_\zeta:(y^{k'}-\zeta r^{n'})^{p^i}=0\]
where $\zeta$ runs over all $s$-th roots of unity. For $\zeta=1$, we get a reduced, geometrically irreducible component
\[C_1:y^{k'}=r^{n'}\]
defined over the ground field since it is fixed by the Galois action. The curve $C_1$ is well-known to be rational and a parametrisation is given by $\theta \mapsto (r,y)(\theta):=(\theta^{k'},\theta^{n'})$. The other $C_\zeta$ are Galois twists of $C_1$ and so have genus $0$ too.

If $k$ and $n$ are coprime, i.\,e.\ $sp^i=1$, then $C$ coincides with the geometrically irreducible component $C_1$. 
\end{proof}

A direct computation gives:

\begin{theorem}\label{thm:param}
 A parametrisation of $C_1$ is given by:
 \[\sigma:\BP^1\to C_1,u\mapsto(x,y,t)(u)=\left(\frac{du^{kn}-b}{a-cu^{kn-k}},u^n,\frac{du^{kn}-b}{u^k(a-cu^{kn-k})}\right)\]
\end{theorem}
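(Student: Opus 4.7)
The plan is to derive $\sigma$ directly by substituting a suitable expression in $u$ for the variable $r$ used in the proof of \Cref{thm:Ckn}. Recall that $C_1 \subset X$ was shown to lie over the auxiliary plane curve $\tilde C_1$ (in coordinates $(r,t)$) via $r = x/t$, and that $\tilde C_1$ was parametrised rationally by $r$ through the equation $tr(cr^{n-1}-a) = b - dr^n$, giving
\[
 t = \frac{dr^n - b}{r(a - cr^{n-1})}.
\]
Additionally, $C_1$ is characterised by the equation $y^{k'} = r^{n'}$, where $k = sp^i k'$ and $n = sp^i n'$ with $(k',n') = 1$.

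First, I would fix the substitution: set $r = u^k$ and $y = u^n$. Since $nk' = kn' = sp^i k' n'$, we have $y^{k'} = u^{nk'} = u^{kn'} = r^{n'}$, so the pair $(r(u), y(u))$ lies on $C_1$ and defines a morphism $\BP^1 \to C_1$. Second, substituting $r = u^k$ into the formula for $t$ above yields
\[
 t(u) = \frac{du^{kn} - b}{u^k(a - cu^{kn-k})},
\]
and then $x(u) = r(u)\,t(u) = u^k t(u) = (du^{kn} - b)/(a - cu^{kn-k})$. These are exactly the coordinates claimed.

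Finally, since $y(u) = u^n$ is non-constant on $\BP^1$, the morphism $\sigma$ is non-constant, hence dominant onto the integral curve $C_1$, so it is a parametrisation in the required sense. I do not anticipate any real obstacle: the only subtlety is recognising that the correct substitution is $r = u^k$ (rather than, say, $r = u^{k'}$), which is forced by the requirement that $y = u^n$ be compatible with $y^{k'} = r^{n'}$; the rest is routine algebraic manipulation.
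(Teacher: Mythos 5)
Your derivation is correct and makes the paper's ``direct computation'' explicit: substituting $r=u^k$, $y=u^n$ into the parametrisation $t=(dr^n-b)/(r(a-cr^{n-1}))$ of $\tilde C_1$ from the proof of \Cref{thm:Ckn}, and taking $x=rt$, reproduces exactly the stated coordinates, while the constraint $y^{k'}=r^{n'}$ holds because $nk'=kn'$. One caveat worth flagging: dominance alone (your last step) does not make $\sigma$ a parametrisation; $u\mapsto(u^k,u^n)$ is birational onto $C_1$ precisely when $\gcd(k,n)=1$ (so $k=k'$, $n=n'$, with inverse $u=r^py^q$ for $pk+qn=1$) and otherwise has generic degree $\gcd(k,n)>1$, but this limitation is already built into the theorem's formula (which uses $u^n$ rather than $\theta^{n'}$) and is harmless in the coprime case to which the paper applies it.
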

 
\subsection{Further Remarks}
\subsubsection{Geometric Considerations involving $\BP^1\times\BP^1$}\label{rem:satge}
The original example by Mestre has been studied by P.\ Satgé in \cite{satge}. There, he utilises the natural map from $K$ to $\BP^1\times\BP^1$ together with the Riemann--Hurwitz theorem to develop a combinatorical criterion for when the preimage of a rational curve on the latter surface yields a rational curve on the former. Amongst the low-degree examples he retrieves with the help of this criterion is the Mestre curve.

\subsubsection{Geometric Considerations involving $A$}
A new different approach which we mention for geometric insight is to first understand the preimage $C'$ on $A=E_1\times E_2$. In what follows, we show how to derive that $C$ has genus $0$ by such arguments in the case of two \emph{generic} elliptic curves, i.\,e. with distinct $j$-invariants and without complex multiplication.

Let $O_1$ and $O_2$ be the points at infinity of $E_1$ and $E_2$. The Néron-Severi group of $E_1\times E_2$ is given by
$\BZ h\oplus\BZ v$ where
\[h:=E_1\times \{O_2\},\ v:=\{O_1\}\times E_2.\]
By Bézout's theorem, the intersection numbers of $C'$ (cut out of $A$ by a quadric in each of the factors of an embedding $E_1\times E_2\subset \BP^2\times\BP^2$ by Weierstrass equations) with $h$ and $v$ are both $6$, therefore the class of $C'$ in the Néron--Severi group is $6h+6v$. Hence by the adjunction formula we deduce $p_a(C')= 37$. We can compute the singularities of $C'$: $(O_1,O_2)$ is a singularity with multiplicity $4$, \[\{(p_1,p_2)\in E_1[2]\setminus O_1\times E_2[2]\setminus O_2\}\] is a set of 9 singularities with multiplicity $2$ and $V(x=t=0)$ is a set of 4 singularities with multiplicity $3$. All singularities are ordinary and $C'$ does not pass through other torsion points than the ones mentioned. Hence $C'$ has geometric genus $10$.

We now use the Riemann-Hurwitz theorem. Before applying it to the double cover $C'\to C$, we first have to blow up the torsion points which are singular to get non-singular ramification points. If such a point $P$ has multiplicity $m_P$, then in the resolution we will have $m_P$ points of ramification index $2$. Indeed, after doing this, in the case $j(E_1)\neq j(E_2)$, Riemann-Hurwitz substitutes to
\[18= 2g(C')-2=2(g(C)-2)+\sum_{P\in\widetilde{C'}}(e_P-1)=2(g(C)-2)+(1\cdot4+9\cdot2)(2-1)\]
 and thus $g(C)=0$.

\subsubsection{Degenerate cases}
 In the cases of geometrically isomorphic $D_1$ and $D_2$ (i.e.\ $a^nd^{n-1}-b^{n-1}c^n=0$ and in particular $a=c=0$ or $b=d=0$), $C$ acquires geometric components which are the graphs of isomorphisms between $D_1$ and $D_2$.

\section{Simultaneous Twists of Superelliptic Curves}
As a corollary of \Cref{thm:Ckn}, we obtain similarly to \cite[Thm. 3]{kuwata}:
\begin{corollary}\label{cor:kn}
 Let $D_1$ and $D_2$ be superelliptic curves over a number field $L$ of the same form as in \Cref{thm:Ckn}. Assume that we are not in one of the cases $a=c=0$ or $b=d=0$. Then there exist infinitely many $[l]\in L^*/(L^*)^k$ such that the twists of $D_1$ and $D_2$ by $[l]$ both have an $L$-rational point.
\end{corollary}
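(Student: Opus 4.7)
The plan is to use the explicit parametrisation $\sigma$ from \Cref{thm:param} to produce, for each $u \in L$, a twist class $[l(u)] \in L^*/(L^*)^k$ whose corresponding twists of $D_1$ and $D_2$ both carry an $L$-rational point, and then to argue that the resulting map $u \mapsto [l(u)]$ has infinite image.

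Since $(a,c) \ne (0,0)$ and $(b,d) \ne (0,0)$, \Cref{thm:Ckn} supplies a rational component $C_1 \subseteq C \subseteq X$ parametrised by $\sigma(u) = (x(u), y(u), t(u))$ with $y(u) = u^n$. For $u \in L$ outside a finite exceptional set define
\[l(u) := x(u)^n + a x(u) + b = \bigl(t(u)^n + c t(u) + d\bigr)\, y(u)^k \in L^*.\]
Taking as twist of $y^k = g(x)$ by $l$ the curve $l y^k = g(x)$, the first expression for $l(u)$ shows that $(x(u), 1)$ is an $L$-rational point of the twist of $D_1$ by $[l(u)]$, while the second shows that $(t(u), 1/y(u))$ is an $L$-rational point of the twist of $D_2$ by $[l(u)]$. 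So it suffices to show that the image of $u \mapsto [l(u)]$ from $L$ to $L^*/(L^*)^k$ is infinite.

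Because $y(u)^k = u^{nk}$ is a $k$-th power, $[l(u)]$ equals $[t(u)^n + c t(u) + d]$ in $L^*/(L^*)^k$. Suppose for contradiction that this image is contained in a finite set $\{[l_1], \dots, [l_m]\}$. Then, up to a finite exceptional set, $L$ is covered by the projections to the $u$-coordinate of the $L$-rational points of the curves
\[\Gamma_j:\quad l_j v^k \;=\; t(u)^n + c t(u) + d, \qquad j = 1, \dots, m.\]
Each $\Gamma_j$ is a cyclic degree-$k$ cover of $\BP^1_u$, branched over the zeros and poles of $t(u)^n + c t(u) + d$ whose orders are not divisible by $k$. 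The rational function $t(u)$ has degree at least $k$ and the polynomial $t^n + c t + d$ has distinct roots for every admissible $(c,d)$ in characteristic $0$, so the composition $t(u)^n + c t(u) + d$ has many simple zeros and poles on $\BP^1_u$. A Riemann--Hurwitz calculation then gives $g(\Gamma_j) \ge 2$, whence $\Gamma_j(L)$ is finite by Faltings' theorem. A finite union of finite sets cannot exhaust the infinite set $L$, a contradiction.

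The main obstacle will be carrying the genus lower bound through uniformly in all coefficient configurations allowed by the hypothesis, especially in degenerate tuples for which $t(u)^n + c t(u) + d$ is a perfect $d$-th power in $\bar{L}(u)$ for some $1 < d \mid k$, so that $\Gamma_j$ splits geometrically and Faltings does not apply directly. In those situations I would fall back on Hilbert's irreducibility theorem applied to a geometrically irreducible factor of $\Gamma_j$ of degree $> 1$ over $\BP^1_u$: its $L$-rational points form a thin subset of $L$, and a finite union of thin sets does not exhaust $L$ over a number field.
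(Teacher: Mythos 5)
Your proof plan is essentially correct, but it follows a genuinely different route from the paper's. After the same setup with $\sigma$ and $[\phi](u) := [t(u)^n + ct(u) + d]$, you argue by Faltings (plus a Hilbert‑irreducibility fallback for geometrically split covers), whereas the paper's published proof avoids both: supposing the image finite, it deduces that $k \mid v(\phi(u))$ for all $u\in L_v$ and all places $v$ outside a finite set $S$ (by $v$‑adic continuity), then picks a place $v\notin S$ splitting completely in the residue field $L(P)$ of a zero or pole $P$ of $\phi$ of order not divisible by $k$; near the resulting $L_v$‑rational lift of $P$, $v(\phi(u))$ is not $\equiv 0\pmod k$, a contradiction. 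Amusingly, the Acknowledgements reveal that the paper's original proof \emph{did} use Faltings and the present local argument was supplied by the referee — so you have essentially recreated the author's first proof. The local argument is more elementary, needs no deep input, and makes the non‑$k$‑th‑power hypothesis on $\phi$ do all the work directly.

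A few observations on your version. Both proofs ultimately rest on the same key fact: $\phi(u)$ is not of the form $l\,\psi(u)^k$ with $l\in L^*$, $\psi\in L(u)$. In your language this is precisely what guarantees no geometric component of $\Gamma_j$ has degree $1$ over $\BP^1_u$; in the paper's it is what supplies the point $P$ with multiplicity prime to $k$. You should make this the explicit pivot rather than the weaker ``$t^n+ct+d$ has distinct roots'' (which is not literally guaranteed by the hypotheses as stated and is not quite the point — it is the composition $\phi(u)$ whose zeros/poles matter, and cancellations against $u^k(a-cu^{kn-k})$ in the denominator of $t(u)$ must be tracked). Also, once you invoke Hilbert irreducibility (thin sets of type $2$), Faltings is superfluous: a generically finite cover of $\BP^1_u$ of degree $\geq 2$ with no degree‑$1$ component has thin image regardless of genus, so the Riemann--Hurwitz estimate and the genus $\geq 2$ case distinction can be dropped, streamlining your argument into a single Hilbert‑irreducibility step. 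That said, the paper's valuation‑theoretic proof remains simpler still.
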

By \emph{twist by $[l]$}, we mean in the case of $D_1$ the curve given by $ly_1^k=x^n+ax+b$ for a representative $l$ in the class $[l]$, and analogously for $D_2$. Up to $L$-isomorphism, it does not depend on the chosen representative. In the special case of $k=2,n=4$ and $k=n=3$, the theorem is a statement about genus $1$ models.

\begin{proof}
This proof follows the same idea as Kuwata-Wang but uses the newly found 
curve $C$ on $X$. Let $x(u),y(u),t(u)$ be as in \Cref{thm:param}. For a superelliptic curve $D$, denote by $D^l$ the twist by $l(L^*)^k$. Using $C$ gives us infinitely many points $(x,y,t)$ such that $D_1^{t^n+ct+d}$ and $D_2^{x^n+ax+b}$ have a rational point. Because $(t^n+ct+d)y^k=x^n+ax+b$, these are isomorphic to twists by the same class. We thus have a map
\[[\phi]:L^*\to L^*/(L^*)^k,u\mapsto (t(u)^n+ct(u)+d)(L^*)^k=(x(u)^n+ax(u)+b)(L^*)^k\]
such that $D_1^{[\phi](u)}$ and $D_2^{[\phi](u)}$ have a rational point.

Let $\phi(u):=t(u)^n+ct(u)+d$. It remains to show that $[\phi]$ does not have finite image. Suppose the image of $[\phi]$ is finite. Then there exists a finite set $S$ of places of $L$ such that $k\mid v(\phi(u))$ for all $u\in L, v\notin S$. This means by continuity that $v(\phi(u))\equiv0\bmod k$ for all $u\in L_v$. However, since $\phi\notin (L(u)^*)^k$ as a rational function (just by computing its numerator and denominator), there exists a point $P\in \BP^1_L$ such that $\phi$ has multiplicity $m$ prime to $k$ at $P$. Let $L(P)/L$ be the residue field extension of $P$. There are infinitely many places of $L$ that split completely in $L(P)$, so pick one $v\notin S$ amongst them and denote by $w$ an extension of $v$ to $L(P)$. Now $\phi$ has a zero or pole $P$ of multiplicity $m$ in $\BP_{L(P)_w}^1=\BP_{L_v}^1$ and in a neighbourhood of $P$, $v(\phi(u))$ cannot be divisible by $k$, yielding a contradiction.
\end{proof}

\section{Further Generalisations}\label{sec:generalisation}
The equation for $C$ in \Cref{thm:Ckn} gives rise to rational curves on an even wider class of surfaces where the exponents of $x$ and $t$ are chosen differently. Some of these curves have genus $0$ but do not contain a rational point. While the method of \Cref{thm:Ckn} does not apply to these generalisations, we can nevertheless give a few interesting examples and applications.

\subsection{Elliptic Curves with $j$-invariant $1728$}
Let $E$ be an elliptic curve with $j$-invariant $1728$ over a field $F$ of characteristic $\neq 2,3$. Let \[E:y^2=x^3+ax\] be an affine model of $E$ in Weierstrass form, in particular $a\neq 0$, and $f(t):=t^4+ct+d$ a polynomial with rational coefficients. Assume $c,d\neq 0$. Quadratic twisting by $f(t)$ yields an elliptic pencil $E^{f(t)}$. The situation at the degenerate fibres is irrelevant for our purposes.
\begin{theorem}\label{thm:C43}
 The surface over $F$ which is the total space of the pencil $E^{f(t)}$ contains a curve $C$ given by
 $(ct+d)y^2=ax$
 with an irreducible component $C_1$ given by $y=0$ and another rational irreducible component $C_2$.
\end{theorem}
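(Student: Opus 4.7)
The plan is to mimic the approach of \Cref{thm:Ckn}: subtract the defining equation of $C$ from that of the surface $S:(t^4+ct+d)y^2 = x^3+ax$ to obtain an auxiliary relation that simplifies the analysis. The component $C_1$ is immediate: setting $y=0$ in $(ct+d)y^2 = ax$ forces $x=0$ (since $a \neq 0$), and the surface equation then holds for all $t$, so $C_1$ is the affine line $\{x=y=0\}$, parametrised by $t$ and visibly rational.

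For $C_2$, I combine the defining equation of $C$ with the equation of $S$: their difference is
\[
t^4 y^2 = x^3, \qquad \text{equivalently}\qquad (yt^2)^2 = x^3,
\]
a cuspidal cubic relation in the three coordinates. On the open locus where $xyt \neq 0$, I parametrise this cusp by $x = u^2$, $yt^2 = u^3$ (equivalently $y = u^3/t^2$), and substitute into $(ct+d)y^2 = ax$; cancelling $u^2$ reduces the problem to the plane curve $(ct+d)u^4 = at^4$. The change of variables $w = u/t$ then transforms this into $w^4(ct+d) = a$, which is linear in $t$ and so solves rationally for $t$ in terms of $w$. Unwinding the substitutions yields explicit rational functions $t(w)$, $x(w)$, $y(w) \in F(w)$, giving a birational parametrisation $\BP^1 \dashrightarrow C_2$ of a second, irreducible rational component $C_2 \subset S$ with plenty of $F$-rational points (from any generic $w \in F$).

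To conclude, $C_1 \neq C_2$ because $y$ vanishes identically on the former but not the latter, and every point of $C$ not on $C_1$ has $y \neq 0$ (hence $x \neq 0$), so it lies on the parametrised locus $C_2$; therefore $C = C_1 \cup C_2$. The crux is recognising the auxiliary relation $(yt^2)^2 = x^3$ and its cuspidal parametrisation; everything else amounts to solving a linear equation in $t$ and routine verification, exactly in the spirit of \Cref{thm:Ckn}.
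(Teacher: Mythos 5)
Your proof is correct, and it is genuinely different in spirit from the paper's, which simply exhibits the parametrisation (found computationally via a uniformiser of the local ring at the smooth point $(0:1:0:0)$ of a projectivisation of $C$) and leaves the verification to the reader. You instead derive the parametrisation by transporting the subtraction trick from \Cref{thm:Ckn} into this asymmetric-exponent setting: subtracting $(ct+d)y^2 = ax$ from the surface equation gives the auxiliary relation $t^4 y^2 = x^3$, which is no longer of the split form $y^k=(x/t)^n$ appearing in \Cref{thm:Ckn} but is still rational (a cuspidal cubic), and the cuspidal parametrisation $x=u^2$, $yt^2=u^3$ followed by $w=u/t$ reduces $(ct+d)u^4 = at^4$ to the linear-in-$t$ equation $(ct+d)w^4=a$. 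Unwinding, one gets
\[
t(w)=\frac{a-dw^4}{cw^4},\quad x(w)=\frac{(a-dw^4)^2}{c^2w^6},\quad y(w)=\frac{a-dw^4}{cw},
\]
which agrees with the paper's $\sigma$ after the reparametrisation $u\mapsto u/c$. You also correctly note that on $C$ the locus $y\neq 0$ forces $x\neq 0$ and $t\neq 0$ (using $a,c,d\neq 0$ and the two defining equations), so that $u=yt^2/x$ and $w=u/t$ are well defined and every point of $C\setminus C_1$ lies on $C_2$. This derivation buys a conceptual explanation of why the parametrisation exists at all and is closer to the method that the paper itself uses in \Cref{thm:Ckn}; the paper's presentation buys brevity but conveys nothing about where $\sigma$ comes from.
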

\begin{proof}
 A parametrisation of $C_2$ is given by:
 \begin{align*}
  \sigma:&\BP^1\to C,\\
  &u\mapsto(x,y,t)(u)\\
  &=\left(\frac{(d^2/c^4)u^8-2dau^4+c^4a^2}{u^6},\frac{(-d/c^4)u^4+a}{u},\frac{(-d/c)u^4+c^3a}{u^4}\right)
 \end{align*}
 One can check explicitly that the image of $\sigma$ is indeed contained in $C$.
\end{proof}

In what follows we fix the parametrisation $\sigma$ above, which was found computationally using a uniformiser of the local ring at the smooth point $(X:Y:T:Z)=(0:1:0:0)$ on a projectivisation of $C$.

\begin{lemma}\label{lem:non-torsion}
 Assume $F=\BQ$. The set of $u\in\BQ$ such that $\sigma(u)$ has infinite order in its fibre $E^{f(t(u))}$ is dense in $\BR$.
\end{lemma}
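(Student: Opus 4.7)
The plan is to apply Silverman's specialization theorem to the elliptic surface $\mathcal{E}' \to \BP^1_u$ obtained by pulling back the total space $\mathcal{E} \to \BP^1_t$ of the pencil $E^{f(t)}$ along the degree-$4$ map $u \mapsto t(u)$. The parametrisation $\sigma$ becomes a section of $\mathcal{E}' \to \BP^1_u$, equivalently a $\BQ(u)$-rational point $P$ of the generic fibre $E^{f(t(u))}/\BQ(u)$. Silverman's theorem (see e.g.\ his \emph{Advanced Topics in the Arithmetic of Elliptic Curves}, Ch.\ III) asserts that if $P$ has infinite order, then the set of $u \in \BQ$ at which $\sigma(u)$ specialises to a torsion point is a set of bounded Weil height in $\BP^1(\BQ)$, hence finite. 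Its complement is cofinite in $\BQ$ and therefore dense in $\BR$.

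It remains to show that $P$ itself has infinite order. Since torsion specialisation is injective at places of good reduction, it suffices to exhibit a single $u_0 \in \BQ$ at which $\sigma(u_0)$ is non-torsion in $E^{f(t(u_0))}(\BQ)$. I would do so by analysing the asymptotics as $u_0 \to \infty$. Passing to a standard Weierstrass form $Y^2 = X^3 + B(u) X$ with $B(u) := a f(t(u))^2$, the point $\sigma(u)$ has $X$-coordinate $X(u) = f(t(u)) \cdot x(u)$. A direct calculation from the formulas in \Cref{thm:C43} shows that $t(u) \to -d/c$ as $u \to \infty$, so $B(u)$ stays bounded near $\infty$, whereas $X(u)$ grows like a nonzero constant multiple of $u^2$. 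Since any elliptic curve over $\BQ$ with $j$-invariant $1728$ has rational torsion of order at most $4$, with $X$-coordinates bounded in terms of $B$ (via Nagell--Lutz applied to a minimal integral model), the torsion $X$-coordinates in $E^{f(t(u_0))}(\BQ)$ admit a uniform bound as $u_0 \to \infty$. Consequently $\sigma(u_0)$ cannot be torsion for $|u_0|$ sufficiently large.

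The main obstacle lies in this non-torsion step, and the delicate point is the uniform bound on torsion $X$-coordinates across the converging family of fibres near $u = \infty$. Once this is in place, the asymptotic growth of $X(u)$ forces non-torsion of $\sigma(u_0)$ for large $u_0$, and Silverman's theorem delivers the density. As a purely algebraic alternative one may bypass Silverman: each torsion multisection $\mathcal{E}'[n]$ with $n \leq 4$ is a proper closed $1$-dimensional subscheme of $\mathcal{E}'$, so its preimage under $\sigma$ is a proper closed subscheme of $\BP^1_u$ and hence finite, as soon as one knows $\sigma$ does not factor through any $\mathcal{E}'[n]$.
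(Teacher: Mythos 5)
Your overall strategy agrees with the paper: reduce to showing that $\sigma$ is a non-torsion section of the elliptic surface $\mathcal{E}' \to \BP^1_u$, then invoke Silverman's specialization theorem (the paper cites the same reference, [Silverman, Adv.\ Topics, III.11.4]). The divergence, and the problem, is in how you establish non-torsion of the generic point.

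Your asymptotic Nagell--Lutz step has a genuine gap. You compare $X(u_0) = f(t(u_0))\,x(u_0) \sim u_0^2$ against the ``bounded'' coefficient $B(u_0) = a f(t(u_0))^2$, but $B(u_0)$ is merely a rational number converging to a real limit; its numerator and denominator are not bounded as $u_0$ ranges over rationals tending to $\infty$. To apply Nagell--Lutz you must first pass to an integral (minimal) model, which rescales $X$ by the square of a denominator of $B(u_0)$. For $u_0 = N \in \BZ$ the denominator of $B(N)$ grows on the order of $N^{32}$, so both the rescaled $X$-coordinate and the discriminant blow up, and there is no uniform bound on torsion $X$-coordinates ``in terms of $B$'' in any useful sense. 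The asymptotic comparison you set up therefore does not close. Your ``purely algebraic alternative'' is circular as stated: knowing that $\sigma$ does not factor through $\mathcal{E}'[n]$ for $n \le 4$ is exactly the assertion that $\sigma$ is not a torsion section, which is the claim you are trying to prove (though it is a clean way to sidestep Silverman once that claim is in hand, using the torsion-order bound $\le 4$ for $j = 1728$).

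The paper establishes generic non-torsion by a short geometric argument that avoids all of this. Base-change along $k(\sqrt{f(t(u))})/k(u)$ so the twist family becomes the constant family $E \times \BP^1$, and consider the induced section $\sigma'' : u \mapsto (x(u),\, y(u)\sqrt{f(t(u))})$. From the explicit formulas, $x(u) \to \infty$ as $u \to 0$, so $\sigma''(0)$ equals the identity of $E$, yet $\sigma''$ is not the zero section. Since distinct torsion sections of an elliptic surface cannot meet over smooth fibres (Huybrechts Rem.\ 11.3.8, Miranda VII.3.2), $\sigma''$ is not torsion, hence neither is $\sigma'$ (nor $\sigma$). This is the step you should replace yours with; the rest of your argument (Silverman plus Northcott to pass from bounded height to finiteness over $\BQ$) is fine.
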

\begin{proof}
    Define $E'_u:f(t(u))y^2=g(x)$, a family of elliptic curves parametrised by $u$. It has a section $\sigma'(u):=(x(u),y(u))$. After a finite base change $k(\sqrt{f(t(u))})/k(u)$, this family becomes trivial and $\sigma'$ is pulled back to the section $\sigma'':u\mapsto (x(u),y(u)\sqrt{f(t(u))})$. We infer that $\sigma''$ is not a torsion section since it intersects the identity section for $u=0$ but distinct torsion sections on elliptic surfaces have to be disjoint at smooth fibres (\cite[Rem. 11.3.8]{huybrechts} -- compare to the similar argument in \cite[VII.3.2]{miranda} for singular fibres). Hence, $\sigma'$ is not torsion either. Now the specialisation theorem (\cite[III.11.4]{silverman2}) says that for almost all $u$, $\sigma(u)$ is not torsion in its fibre.
\end{proof}

From this, one immediately deduces Zariski density of rational points:
\begin{corollary}\label{thm:zariski43}
 Assume $F=\BQ$. Infinitely many fibres of $E^{f(t)}$ have positive rank. More precisely, there is a set of $W\subset\BQ$, which is dense in the half-interval $(-d/c,\infty)$ if $ac>0$, respectively dense in $(-\infty,-d/c)$ if $ac<0$, such that the $E^{f(t)}$ has positive rank for all $t\in W$.
\end{corollary}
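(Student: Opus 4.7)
The plan is to combine \Cref{lem:non-torsion} with an elementary analysis of the image of the $t$-coordinate of the parametrisation $\sigma$ from \Cref{thm:C43}. Rewriting the third coordinate as
\[t(u) = -\frac{d}{c} + \frac{c^{3}a}{u^{4}},\]
one sees that $t\colon \BR^{*}\to\BR$ is continuous and that the second summand has the same sign as $c^{3}a$; since $c^{2}>0$, this sign is precisely $\operatorname{sign}(ac)$. I would conclude from this that the image of $t$ is exactly the open half-interval $(-d/c,\infty)$ when $ac>0$ and $(-\infty,-d/c)$ when $ac<0$, and that $t$ restricted to $\BR^{*}$ is an open map.

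Next I would invoke \Cref{lem:non-torsion}: the set $U\subset\BQ$ of those $u$ for which $\sigma(u)$ is non-torsion in its fibre is dense in $\BR$. I set $W:=t(U\setminus\{0\})\subset\BQ$ and claim that $W$ is dense in the appropriate half-interval. Indeed, for any non-empty open interval $I$ contained in the image of $t$, the preimage $t^{-1}(I)$ is a non-empty open subset of $\BR^{*}$, which by density of $U$ must meet $U$; any such $u$ then yields an element $t(u)\in W\cap I$.

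Finally, for every $t_{0}=t(u)\in W$ with $u\in U$, the specialisation $\sigma(u)$ is a rational point of infinite order on the fibre $E^{f(t_{0})}$, so that fibre has positive Mordell--Weil rank. Since $W$ is dense in an interval it is in particular infinite, which delivers the first assertion of the corollary as a consequence of the second, more precise one. I do not anticipate a serious obstacle: the arithmetic content is fully encapsulated in \Cref{lem:non-torsion}, and what remains is just the routine check that $u\mapsto t(u)$ surjects onto the claimed half-interval, which is immediate from the explicit formula above.
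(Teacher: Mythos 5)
Your proposal is correct and follows the same two ingredients as the paper's (very terse) proof: identify the image of $u\mapsto t(u)$ as the stated open half-interval via the explicit formula $t(u)=-d/c+c^{3}a/u^{4}$, then push the dense set of non-torsion parameters from \Cref{lem:non-torsion} forward through $t$. You have merely filled in the routine continuity/density details that the paper leaves implicit.
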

\begin{proof}
 The respective half-intervals given above are the images of $u\mapsto t(u)$. Now use \Cref{lem:non-torsion}.
\end{proof}
Note that density of the positive rank fibres in $E^{f(t)}$ over a non-empty open interval should be true if $E$ is any elliptic curve over $\BQ$ and $f$ is any polynomial with a real zero of odd order by a result of Rohrlich \cite[Thm. 2]{rohrlich}, conditional on the parity conjecture.

We deduce a new special case of Mazur's conjecture applied to elliptic pencils \cite[Conj. 4]{mazur}.
\begin{theorem}\label{thm:mazur43}
  Let $E$ be an elliptic curve over $\BQ$ with $j$-invariant $1728$ and let $f(t)=t^4+ct+d$ be a quartic polynomial over $\BQ$. Assume that $c,d\neq 0$ and $f(t)$ is non-negative for all $t\in\BR$. Then the set of $t\in\BQ$ with $\rank E^{f(t)}>0$ is dense in $\BR$.
\end{theorem}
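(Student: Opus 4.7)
The plan is to combine the density in one half-interval provided by \Cref{thm:zariski43} with the parity of root numbers for quadratic twists of $E$, which holds unconditionally here because $E$ has complex multiplication by $\BZ[i]$ (as $j=1728$).

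First I apply \Cref{thm:zariski43} to obtain a subset $W_1\subset\BQ$, dense in one of the half-intervals $(-d/c,\infty)$ or $(-\infty,-d/c)$ depending on the sign of $ac$, such that $\rank E^{f(t)}\geq 1$ for all $t\in W_1$. Silverman's specialization theorem applied to the $\BQ(u)$-section $\sigma$ of \Cref{thm:param} (non-torsion by \Cref{lem:non-torsion}) shows that outside a thin subset of $W_1$ the specialized rank equals exactly one. By the parity of Mordell--Weil ranks for twists of the CM curve $E$, the global root number then satisfies $w(E^{f(t)})=-1$ for such $t$, so the set $S:=\{t\in\BQ:w(E^{f(t)})=-1\}$ is non-empty.

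Next I analyze $t\mapsto w(E^{f(t)})$ as a function on $\BQ$. Since $f(t)\geq 0$ on all of $\BR$, the sign of $f(t)$ does not change as $t$ varies, and so the archimedean local root number of the quadratic twist $E^{f(t)}$ is independent of $t$; this is where the hypothesis $f\geq 0$ enters essentially. At each finite prime $p$, Tate's local $\epsilon$-factor of $E^{f(t)}$ depends on $t$ only modulo a fixed power of $p$, and only finitely many primes (those dividing $2\cdot N_E$, together with those at which $f(t)$ can acquire positive $p$-adic valuation for some $t\in\BQ$) contribute nontrivially. Consequently $w(E^{f(t)})$ is locally constant in the adelic topology and depends on $t$ only modulo some positive integer $N$. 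The set $S$ is then a finite union of residue classes modulo $N$; being non-empty, it is dense in $\BR$. Applying parity once more, every $t\in S$ satisfies $\rank E^{f(t)}\equiv 1\pmod 2$, and in particular $\rank E^{f(t)}\geq 1$, yielding the claimed density.

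The main obstacle I expect is the root number analysis in the middle step: writing down Tate's local $\epsilon$-factors for the quadratic twist of the CM elliptic curve $E$ by the values of $f$ at the bad primes and confirming the claimed local constancy modulo $N$. A secondary concern is verifying that Silverman's specialization does produce $t\in W_1$ of rank exactly one (and not higher), so that $S\neq\emptyset$; this should follow from a standard upper bound on the rank of the generic fibre over $\BQ(u)$ coming from the Mordell--Weil lattice of the elliptic surface.
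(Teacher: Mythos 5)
Your proposal takes a genuinely different route from the paper, but it contains gaps that prevent it from giving an unconditional result. The paper's proof does not use root numbers or parity at all: it exploits a second genus-$1$ fibration. Specifically, projecting the total space of $E^{f(t)}$ to the $x$-coordinate gives a genus-$1$ pencil $E_x$, and the parametrisation $\sigma$ supplies \emph{two} rational points $O_u$ and $O_{-u}$ on the same fibre $E_{x(u)}$ (since $x$ and $t$ are even in $u$ while $y$ is odd). Taking $O_{u_0}$ as origin, $O_{-u_0}$ is shown to be non-torsion in $E_{x_0}$; its multiples are then dense in a real connected component of $E_{x_0}(\BR)$. Mazur's torsion bound shows the points that are torsion in their $E^{f(t)}$-fibre lie in a proper Zariski-closed set $S$, and $E^{f(t_0)}\cap S$ is finite, so removing $S$ preserves density. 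Finally, $f\geq 0$ is used precisely to guarantee that the real connected components of $E_{x_0}(\BR)$ project surjectively to the $t$-line.

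By contrast, your argument runs exactly along the conditional path that the paper is at pains to avoid; see the remark immediately preceding the theorem in the paper, citing Rohrlich's Theorem~2 as being \emph{conditional on the parity conjecture}. Concretely:

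\begin{enumerate}
\item The step \enquote{$w(E^{f(t)})=-1$ implies $\rank E^{f(t)}>0$} is the parity conjecture for Mordell--Weil ranks. The $p$-parity theorem (Dokchitser--Dokchitser, Nekov\'a\v{r}) gives the parity of the $p^\infty$-Selmer rank, not of the Mordell--Weil rank; passing from one to the other requires finiteness of the corresponding part of $Sha$, which is not known unconditionally for CM elliptic curves across all ranks. Rubin's work covers analytic rank $\leq 1$ but you have no control on the analytic rank.
\item Your claim that $w(E^{f(t)})$ depends on $t$ only modulo a fixed integer $N$ is not correct as stated: as $t$ varies over $\BQ$, the primes dividing $f(t)$ to odd order are unbounded, and the local $\epsilon$-factor at each such prime contributes. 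The set of primes \enquote{at which $f(t)$ can acquire positive $p$-adic valuation for some $t\in\BQ$} is infinite, not finite. The periodicity/constancy of the root number in CM twist families with sign-definite $f$ is precisely what Rohrlich's analysis addresses, and it is delicate; it cannot be asserted by a soft local-constancy argument.
\item Silverman's specialization theorem gives a \emph{lower} bound on the rank of the fibre (the specialization homomorphism is injective off a small set), not an upper bound, so it does not show the specialized rank is exactly one. In any case, to apply parity you would need the rank to be odd, which neither \Cref{thm:zariski43} nor specialization supplies.
\end{enumerate}

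The paper's group-law argument is designed to make the statement unconditional and entirely bypasses the root-number machinery.
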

A result by Rohrlich \cite[Thm. 3]{rohrlich} settled the case of $f$ being a quadratic polynomial using similar ideas as \cite{kuwata} for cubic polynomials. \Cref{thm:mazur43} complements Kuwata and Wang's quartic example $(t^4+1)y^2=x^3-4x$ \cite[p. 121]{kuwata} which they derived from the work by Elkies mentioned in the introduction. A recent preprint by Huang \cite{huang} deals with $d(t^4+1)y^2=x^3-x$ for some $d$. By entirely different methods and under some additional assumption, \cite[Prop. 1.1]{harpaz} proves Mazur's conjecture for the Kummer quotient associated to the product of non-trivial $2$-coverings of elliptic curves.

\begin{proof}
 View $E^{f(t)}$ as a genus $1$ pencil $E_x$ with respect to projection to $x$. A priori, the fibres do not have rational points but there are infinitely many which do. Namely, $O_u:=(x(u),y(u),t(u))$ and $O_{-u}:=(x(u),y(-u),t(u))$ are two (generically distinct) rational points in their fibre $E_{x(u)}$.
 
 Now by the same argument as in \Cref{lem:non-torsion}, for some choice of $u_0\in\BQ$ the point $(x_0,y_0,t_0):=O_{-u_0}$ has infinite order in $E_{x_0}$ with respect to the identity chosen as $O_{u_0}\in E_{x_0}$, as well as infinite order in $E^{f(t_0)}$. Using the group law on $E_{x_0}$, we spread $O_{-u_0}$ to get a dense set $T$ in a connected component of $E_{x_0}(\BR)$. By Mazur's torsion bound \cite{torsion} the rational points $(x,y,t)$ that are torsion in their fibre $E^{f(t)}$ lie in a proper Zariski-closed subset $S$ of the total space. The intersection $E^{f(t_0)}\cap S$ is finite because otherwise, one would have $E^{f(t_0)}\subset S$ but $O_{-u_0}\in E^{f(t_0)}\setminus S$. It follows that $T':=T\setminus S$ is dense in a connected component of $E_{x_0}(\BR)$. But by assumption on $f$, connected components of $E_{x_0}(\BR)$ project surjectively to $t$ so that the image of $T'$ projects densely to $t$.
\end{proof}
 
\subsection{Elliptic Curves with $j$-invariant $0$}
Let $E$ be an elliptic curve with j-invariant $0$ over a field $F$ of characteristic $\neq 2,3$. Let \[E:y^2=x^3+b\] be an affine model of $E$ in Weierstrass form and $f(t):=t^6+ct+d$ a polynomial with rational coefficients. Assume $b,c\neq 0$. Quadratic twisting by $f(t)$ yields an elliptic pencil $E^{f(t)}$. Once again, the situation at the degenerate fibres is irrelevant for our purposes.
\begin{theorem}\label{thm:C63}
 The surface which is the total space of the pencil $E^{f(t)}$ contains a curve given by
 $(ct+d)y^2=b$
 with a rational irreducible component.
\end{theorem}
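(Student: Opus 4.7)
My plan is to follow the pattern of the proof of Theorem \ref{thm:C43} and exhibit an explicit rational parametrization of a component of $C$.

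First, I will subtract the defining equation $(ct+d)y^2 = b$ of the auxiliary hypersurface from the surface equation $f(t)y^2 = x^3 + b$ to obtain
\[
 t^6 y^2 = x^3,
\]
so every point of $C$ satisfies this together with $(ct+d)y^2 = b$. The relation $x^3 = (t^3 y)^2$ is a cuspidal cubic pattern which is easy to parametrize: setting $y = u^3$, one has $t^3 y = (tu)^3$, and hence $x = u^2 t^2$. Substituting these choices into $(ct+d)y^2 = b$ yields the relation $(ct+d) u^6 = b$, which is linear in $t$ and solves uniquely to $t = t(u) := (b - d u^6)/(c u^6)$. The resulting map
\[
 \sigma : \BP^1 \to C, \qquad u \mapsto \left(\frac{(b - d u^6)^2}{c^2 u^{10}},\ u^3,\ \frac{b - d u^6}{c u^6}\right)
\]
is birational onto its image. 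The final step is a short direct check that $\sigma(u)$ also satisfies the pencil-surface equation: using $(ct+d)u^6 = b$ by construction,
\[
 f(t) y^2 = t^6 u^6 + (ct+d) u^6 = t^6 u^6 + b = x^3 + b,
\]
as required.

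I anticipate no serious obstacle. The only slightly delicate point is guessing the cube substitution $y = u^3$, which is natural given that $j = 0$ elliptic curves carry an automorphism of order $3$ acting on $x$ by cube roots of unity. In fact, the same analysis shows that $C$ is geometrically irreducible: under the $(u, t)$-parametrization of the auxiliary surface $\{x^3 = t^6 y^2\}$, the equation $(ct+d)y^2 = b$ pulls back to $c u^6 t + d u^6 - b = 0$, which is linear in $t$ with nonzero leading coefficient $cu^6$ and nonzero constant term $-b$, hence irreducible over $\overline{F}$. Consequently $\sigma$ parametrizes the whole of $C$, not merely one component.
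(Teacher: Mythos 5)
Your proof is correct, and it yields the same parametrization as the paper, up to the reparametrization $u \mapsto u/b$ (substituting $u/b$ for $u$ in your formulas reproduces the paper's $\sigma$ exactly). The difference is in how the parametrization is obtained: the paper simply presents the formula, noting it was found computationally via a uniformiser at a smooth point of a projectivisation of $C$, and leaves the verification to the reader; you instead derive it conceptually by subtracting the defining equations to isolate the cuspidal relation $t^6 y^2 = x^3$, whose obvious parametrization $(u,t)\mapsto (u^2t^2,u^3,t)$ reduces the problem to solving a $t$-linear equation. Your route also delivers a stronger conclusion, namely geometric irreducibility of $C$ (the paper only claims a rational irreducible component); the step ``linear in $t$ with nonzero leading coefficient and nonzero constant term, hence irreducible'' is slightly terse, since that alone does not force irreducibility, but it is easily completed: $\gcd(cu^6,\,du^6-b)=1$ in $\overline{F}[u]$ because $b\neq 0$, so $cu^6 t + (du^6-b)$ is primitive of degree one in $t$ and hence irreducible by Gauss's lemma, and since the $(u,t)$-parametrization of $\{t^6y^2=x^3\}$ is surjective, the irreducibility descends to $C$.
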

\begin{proof}
A parametrisation is given by:
\begin{align*}
\sigma:&\BP^1\to C,\\
&u\mapsto(x,y,t)(u)\\
&=\left(\left(\frac{d^2}{b^2c^2}u^{12}-\frac{2db^5}{c^2}u^6+\frac{b^{12}}{c^2}\right)/u^{10},u^3/b^3,\left(\frac{b^7}{c}-\frac{d}{c}u^6\right)/u^6\right).
\end{align*}
One can check explicitly that the image of $\sigma$ is indeed contained in $C$.
\end{proof}

In what follows we fix the parametrisation $\sigma$ above, which was found computationally using a uniformiser of the local ring at the smooth point $(X:Y:T:Z)=(0:1:0:0)$ on a projectivisation of $C$. We can then prove an analogue to \Cref{thm:zariski43}.
\begin{lemma}
 Assume $d\neq 0$ and $F=\BQ$. Then infinitely many fibres of $E^{f(t)}$ have positive rank. More precisely, there is a set $W\subset\BQ$ which is dense in the half-interval $(-d/c,\infty)$ if $ac>0$, respectively dense in $(-\infty,-d/c)$ if $ac<0$, such that $E^{f(t)}$ has positive rank for all $t\in W$.
\end{lemma}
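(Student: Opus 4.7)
The plan is to adapt the argument of \Cref{thm:zariski43} (which rests on \Cref{lem:non-torsion}) to the $j$-invariant $0$ setting. First one shows that the section furnished by \Cref{thm:C63} is non-torsion in the elliptic pencil $E^{f(t)}$; then Silverman's specialisation theorem yields that $\sigma(u)$ has infinite order in its fibre $E^{f(t(u))}$ for all but finitely many $u \in \BQ$; finally the real image of $u \mapsto t(u)$ is computed to identify the half-line in which $W$ lies densely.

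For the non-torsion step, regard the pulled-back family $E'_u : f(t(u)) y^2 = x^3 + b$ over the $u$-line with section $\sigma'(u) := (x(u), y(u))$. The quadratic base change $\BQ(u,\sqrt{f(t(u))})/\BQ(u)$ trivialises the twist, so $\sigma'$ pulls back to \[\sigma''(u) = \bigl(x(u),\, y(u)\sqrt{f(t(u))}\bigr),\] now a section of a constant family with fibre $E$. As in the proof of \Cref{lem:non-torsion}, distinct torsion sections of an elliptic surface are disjoint over smooth fibres \cite[Rem. 11.3.8]{huybrechts}, so it suffices to show that $\sigma''$ meets the identity section over some smooth fibre without coinciding with it identically. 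As $u \to \infty$, one checks that $x(u), y(u) \to \infty$ with $x(u)^3/y(u)^2 \to d^6/c^6 = f(-d/c)$, so $\sigma(u)$ tends to the point at infinity of the fibre over $t = -d/c$; this fibre is smooth since $f(-d/c) = d^6/c^6 \neq 0$ under the hypothesis $d \neq 0$. On the other hand $x(u)$ is visibly non-constant as a rational function of $u$, so $\sigma''$ is not identically the identity section, hence not torsion, and consequently $\sigma'$ is not torsion either.

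By the specialisation theorem \cite[III.11.4]{silverman2}, the set of $u \in \BQ$ for which $\sigma(u)$ is torsion in $E^{f(t(u))}$ is thin, and its complement exhibits fibres of positive rank. To identify $W$, one computes \[t(u) = \frac{b^7}{cu^6} - \frac{d}{c}.\] Since $u^{-6} > 0$ takes every positive real value continuously as $u$ ranges over $\BR \setminus \{0\}$, the image of $t$ equals $(-d/c,\infty)$ when $b^7/c > 0$ and $(-\infty,-d/c)$ when $b^7/c < 0$; as $b^7$ shares its sign with $b$, this dichotomy is governed by the sign of $bc$ (the statement's reference to $ac$ appearing to be a vestige of the $j$-invariant $1728$ setup). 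Density of $W$ in the prescribed half-line then follows from density of $\BQ$ in $\BR$ together with continuity of $t(u)$. The main obstacle is the non-torsion verification in $E'_u$, but the quadratic-twist manoeuvre from \Cref{lem:non-torsion} carries over with minimal adjustment.
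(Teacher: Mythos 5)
Your proposal is correct but takes a genuinely different route from the paper. You adapt the argument of \Cref{lem:non-torsion}: pass to the quadratic base change trivialising the twist, observe that $\sigma''$ hits the identity section over $u=\infty$ without being identically the identity, invoke disjointness of distinct torsion sections over smooth fibres, and finish with Silverman's specialisation theorem. The paper instead gives a direct arithmetic argument via the Lutz--Nagell criterion: after clearing denominators, it writes $u=k/l$, produces an integral Weierstrass model, and shows the $y'$-coordinate of $\sigma(u)$ fails to be integral once $l$ is large, so $\sigma(u)$ cannot be torsion in its fibre. Your approach buys uniformity -- it is literally the same template as the $j=1728$ case, so the two lemmas are proved by one idea -- and it only needs $d\neq 0$ at the single step of confirming that the fibre at $u=\infty$ is unramified for the double cover. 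The paper's approach is more elementary (no specialisation theorem) and more explicit about which $u$ work (all $k/l$ with $l$ large enough), at the cost of an ad hoc computation specific to the short Weierstrass form $y^2 = x^3 + b$. Both close the same gap. You also correctly observe that the sign condition in the statement should read $bc$ rather than $ac$, since the image of $t(u) = b^7/(cu^6) - d/c$ is governed by the sign of $b^7/c$; this indeed looks like a copy-paste slip from the $j$-invariant $1728$ version, and the paper's own proof does not catch it.
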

\begin{proof}
  By clearing denominators, the coefficients $a,b$ and $d$ can be assumed integral. We want to show that $\sigma(u)$ is non-torsion for a dense set of $u$. Set $u:=k/l$ with coprime $k,l\in\BZ$ and $s:=ck^6$. Then an integral model of $E^{f(t(u))}$ is given by:
  \[y'^2=x'^3+s^{24}f(t(u))d\]
  where $y'=s^{12}f(t(u))^2y$ and $x=s^8f(t(u))x$. For $l$ large enough $y'(u)=s^{12}f(t(u))^2y(u)$ is not integral and thus by the Lutz-Nagell criterion \cite[VIII.7.2]{silverman}, $\sigma(u)$ cannot be a torsion point.
  
  The respective half-intervals given above are the images of $u\mapsto t(u)$.
\end{proof}

\section{Proof of Mazur's Conjecture for the Kummer Surface of a Product Abelian Surface}\label{sec:mazurK}
In \cite[Thm. 3']{kuwata}, a sketch was given that extends \Cref{thm:mazur-wang} to a proof of Mazur's conjecture for all $j$-invariants. It has to proceed along lines different from \Cref{thm:mazur-wang} because the parametric curve is not available in the cases of equal $j$-invariants $0$ or $1728$. The strategy was to rely on the two elliptic pencils given by projections to $x$ and $t$ to spread rational points using the group laws. As communicated between the author and M.\ Kuwata, it is not clear whether this method is sufficient to get density in the real locus. We thus give a first proof.

\begin{theorem}\label{thm:mazur33}
 Let $K$ be the Kummer surface associated to the product of two arbitrary elliptic curves $E_1$ and $E_2$ over $\BQ$. Assume the rational points are Zariski dense in $K$. Then they are dense in the real topology of $K$.
\end{theorem}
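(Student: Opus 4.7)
The plan is to exploit the two natural elliptic fibrations $\pi_1,\pi_2\colon K\to\BP^1$ given by projections onto $x$ and $t$, whose generic fibres are twists of $E_2$ and $E_1$ respectively, and to spread rational points via the fibrewise group law while tracking the real topology carefully. First I would deduce from the Zariski density hypothesis that at least one fibration, say $\pi_1$, has positive generic Mordell--Weil rank: if both fibrations had generic rank zero, then via Mazur's torsion bound \cite{torsion} and Silverman's specialisation theorem \cite[III.11.4]{silverman2} one can argue that all rational points of $K$ are constrained to a proper Zariski-closed subset built out of finitely many torsion multisections of the two fibrations, contradicting the hypothesis. Silverman's theorem then furnishes, for every $x_0\in\BQ$ outside a thin exceptional set, a rational point $P_{x_0}\in\pi_1^{-1}(x_0)(\BQ)$ of infinite order obtained by specialising a fixed non-torsion multisection.

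Next I would apply the classical fibrewise spreading argument: the multiples of $P_{x_0}$ under the group law of the elliptic curve $E_{x_0}:=\pi_1^{-1}(x_0)$ are dense in the identity connected real component of $E_{x_0}(\BR)$. Letting $x_0$ vary over a subset of $\BQ$ that is dense in a real interval, the continuous variation of fibres implies that the union of these dense subsets is dense in an open subset of $K(\BR)$, namely the sweep of identity real components of smooth real fibres. Iterating by running the analogous argument for $\pi_2$ starting from points produced in the first round — and verifying by a further application of the specialisation theorem that they remain of infinite order in the $\pi_2$-fibres that contain them — propagates density between overlapping open subsets of $K(\BR)$, progressively enlarging the dense region. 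This is the pingpong idea sketched in \cite{kuwata} but here applied with attention to the real topology rather than only the Zariski topology.

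The hard part will be the final step: showing that the closure of $K(\BQ)$ in $K(\BR)$ is a union of connected real components, as demanded by Mazur's conjecture. The real locus $K(\BR)$ inherits a combinatorial structure from $E_1(\BR)\times E_2(\BR)/\{\pm 1\}$ in which individual fibres of either $\pi_i$ may have one or two real components, and the non-identity component of an elliptic curve is reached from the identity only by translation by a real $2$-torsion point, which need not be rational. I would address this by combining the iterated spreadings along $\pi_1$ and $\pi_2$ with translations by rational $2$-torsion sections where available, together with a careful case analysis of how real components of $\pi_1$- and $\pi_2$-fibres intersect. In the cases $j(E_1)=j(E_2)\in\{0,1728\}$ that remained open after \Cref{thm:mazur-wang}, the additional symmetries coming from the complex multiplication of the factors and from fibrewise isogenies on $K$ should allow one to produce a seed rational point in every real component that meets $K(\BQ)$, so that the spreading procedure above applied to each such seed yields the required density.
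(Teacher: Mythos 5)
Your proposal is precisely the two-fibration ping-pong that Kuwata--Wang sketched for their Theorem~3$'$, and the paper opens Section~\ref{sec:mazurK} by explaining that it is \emph{not} known whether this method yields density in the real topology; the difficulty you flag in your last paragraph is exactly the one that stops it. Spreading by the group law of $\pi_1$ or $\pi_2$ only fills the identity connected component of each real fibre. To reach the oval of a two-component fibre you would need to translate by a $2$-torsion point lying on that oval, and such a point need not be rational: the roots of $g(x)=x^3+ax+b$ can be three irrational reals, in which case no rational $2$-torsion section exists at all, and even when one exists it may sit on the identity component and hence preserve components rather than swap them. Invoking CM symmetries for the cases $j\in\{0,1728\}$ does not rescue this, and in any case the theorem is stated for \emph{arbitrary} $E_1,E_2$, so a uniform argument is needed. (There is also a smaller soft spot: the points you produce by spreading along $\pi_1$ are not specialisations of a fixed multisection, so a ``further application of the specialisation theorem'' is not available to certify that they are non-torsion in their $\pi_2$-fibres; one must instead argue via Mazur's torsion bound as the paper does.)

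The missing ingredient is the \emph{third} fibration $K_y$, projection to the Kummer coordinate $y$, whose fibres are plane cubics without a distinguished rational point. The paper's \Cref{lem:inflection} is the key: if $P\in E(F)$ lies on a plane cubic and $I$ is an inflection point over an extension $F'$, then for every $n$ the multiple $(3n+1)P$ taken in the $I$-based group law is $F$-rational, because $(3n+1)[P]-nH$ has degree $1$ and is therefore linearly equivalent to an $F$-rational point. (The relevant torsion bound on these fibres is Merel's over $k(I)$, not Mazur's.) The topological payoff, which your proposal has no substitute for, is that for $|y_0|$ small the real fibre $K_{y_0}(\BR)$ is \emph{connected} (shown via Ehresmann's lemma by degenerating to $\tau^3=g(x)$) and the line $y=y_0$ meets a nearby elliptic fibre $K_{t_1}$ in three real points, two of them on the oval. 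So after moving $P$ along $K_{t_0}$ to make $|y_0|$ small, spreading the multiples $(3n+1)P$ along the connected cubic $K_{y_0}$ reaches both real components of the $K_{t'}$-fibres with $t'$ near any prescribed $t_1$ -- which is exactly what the two-elliptic-fibration ping-pong cannot guarantee. Without this mechanism, or an equivalent one, the proposal does not close.
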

\begin{proof}
 Recall that an affine equation of $K$ was given in \Cref{sec:intro} by
 \[f(t)y^2=g(x).\]
 Let $K_t$ and $K_y$ be the fibrations given by projections to the respective coordinates. Note that only the first comes equipped with a section and thus a natural group law. The fibres of $K_y$ are cubic curves and may not have a rational point.
 
 Let $t_1\in\BR$ be arbitrary. If we show that for any $\epsilon>0$, there exists an approximating $t'\in\BQ$ with $|t'-t_1|<\epsilon$ such that the topological closure $\overline{K_{t'}(\BQ)}$ is $K_{t'}(\BR)$, then we are done.
 
 Let $S$ be the Zariski closure of the set of rational points on $K$ that are torsion in their fibre $K_t$ or torsion in their fibre $K_y$ with respect to any of the inflection points chosen as identity. (The latter does not depend on the chosen inflection point since $3[I_1]=3[I_2]$ in $\Pic(K_y)$ for any inflection points $I_1, I_2 \in K_y(\BC)$ where $[\cdot]$ denotes the class of a divisor modulo linear equivalence.)
 
 \subsection*{Claim: $S\neq K$.} Assume $Q=(x,y,t)\in S(\BQ)$ is torsion in its fibre $K_t$. Then by Mazur's torsion bound, $Q$ lies in a proper closed subset $S_1$ of $K$.
 
 Now assume $Q=(x,y,t)\in S(\BQ)$ is torsion in its fibre $K_y$ with respect to some inflection point $I\in K_y(\BC)$. Then by Merel's torsion bound \cite{merel} for the number field $k(I)$, there is a bound $N$ (only depending on the uniformly bounded degree of the residue field $k(I)/\BQ$) such that $n_QQ=I$ for some positive $n_Q<N$. This can again be expressed by some necessary algebraic relations so that $Q$ lies in a proper closed subset $S_2$ of $K$. This proves the claim since $S\subset S_1 \cup S_2$.
 
 By assumption of Zariski density, there exists a point $P=(x_0,y_0,t_0)\in K(\BQ)$ outside of $S$. Because $S$ is algebraic, we know that $K_{y_0}\cap S$ is finite. Otherwise, one would have $K_{y_0}\subset S$ which is impossible since $P\in K_{y_0}\setminus S$. In the same way, we conclude that $K_{t_0}\cap S$ is finite.
 
 Multiples of $P$ with respect to the group law on $K_{t_0}$ are dense in the identity component of $K_{t_0}(\BR)$, which maps surjectively to the $y$-coordinate. Therefore we can replace $P$ without loss of generality by one such multiple $(x_0,y_0,t_0)$ which is not in $S$ with arbitrarily small $|y_0|$. Using this we may make two assumptions about $P$:
 
 \begin{enumerate}
  \item \label{it:connected} We can assume $|y_0|$ is sufficiently small such that $K_{y_0}(\BR)$ is connected. To see this, after setting $u:=\sqrt[3]{y_0}\in\BR$ and $\tau:=tu^2$, we can write $K_{y_0}(\BR)$ as:
 \[\tau^3+c\tau u^4+du^6=g(x).\]
 This is a family of curves parameterised by $u$ which is smooth in a neighbourhood of $u=0$. By Ehresmann's lemma, for small $|u|$ (and hence small $|y_0|$) its fibre is homeomorphic to the real curve $\tau^3=g(x)$, which in turn is homeomorphic to the connected real curve $v=g(x)$, where we set $v:=\tau^3$.
 \item \label{it:bound} Moreover, if $g(x)$ has three real roots, we define $m<0$ and $M>0$ as local minimum and maximum of $g(x)$ and assume that $|y_0|$ is sufficiently small such that
  \[m<f(t_1)y_0^2<M,\]
  where $t_1\in \BR$ is as in the beginning of the proof.
 \end{enumerate}
 
 Choose some inflection point $I_0\in K_{y_0}(\BC)$ as identity for the group law on $K_{y_0}$. Then by \Cref{lem:inflection} below, \[T:=\{(3n+1)P|n\in\BZ\}\subset K_{y_0}(\BQ).\] 
 
 By Assumption (\ref{it:connected}), $K_{y_0}(\BR)$ is isomorphic to the real Lie group $\BR/\BZ$ and $T$ is dense in it since $P$ is not torsion in $K_{y_0}$. Let $T':=T\setminus S$. Because $(T\cap S)\subset (K_{y_0}\cap S)$ is finite, the set of rational points $T'$ is also dense in $K_{y_0}(\BR)$. 
 
 We distinguish two cases to finish the proof of the theorem:

  \textbf{$g(x)$ has only one real root:} Then $K_t(\BR)$ is connected for all $t\in\BR$. We have to find a non-torsion point in $K_{t'}(\BQ)$ for some $t'\in\BQ$ with $|t'-t_1|<\epsilon$. The set $T'$ is dense in $K_{y_0}(\BR)$ and the projection from $K_{y_0}(\BR)$ to the $t$-coordinate is surjective. Hence the image of $T'$ under this projection is dense in $\BR$ and we can find $(x',y_0,t')\in T'$ with $|t'-t_1|<\epsilon$.
  
  \textbf{$g(x)$ has three real roots:} Then $K_t(\BR)$ has two connected components for all $t\in\BR$ and we denote its non-identity component by $N_t(\BR)$. It remains to show the existence of a rational point $P'\in N_{t'}(\BR)$ of infinite order in $K_{t'}$ for some $t'\in\BQ$ with $|t'-t_1|<\epsilon$.
  
  Observe that $K_{y_0}(\BR)\cap K_{t_1}(\BR)$ is the intersection of the elliptic curve $K_{t_1}(\BR)$ with the line $\{y=y_0\}$. By Assumption (\ref{it:bound}), this intersection consists of three points, of which exactly two lie in the \emph{oval} component $N_{t_1}(\BR)$. As $K_{y_0}$ is connected and $T'$ dense in $K_{y_0}(\BR)$, for any of these two intersection points $(x,y_0,t_1)\in N_{t_1}(\BR)$ we can find $P'=(x',y_0,t')\in T'$ such that $|t'-t_1|<\epsilon$ and $P'\in N_{t'}(\BR)$. 
\end{proof}

In classical geometric terms, the following lemma spreads rational points using secants and tangents without the need of a group law defined over the ground field.
\begin{lemma}\label{lem:inflection}
 Let $E$ be a plane cubic curve over a field $F$ and let $P\in E(F)$. Let $F'/F$ be a finite field extension and let $I\in E(F')$ be an inflection point. Equip $E_{F'}$ with the group structure with $I$ as neutral point. Then for all $n\in\BZ$, the multiple $(3n+1)P$ is $F$-rational.
\end{lemma}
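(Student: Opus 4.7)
My approach would be the classical chord-and-tangent construction, which produces $F$-rational points from $F$-rational ones without requiring the identity $I$ of the group law to be $F$-rational. The key observation is that, since $I$ is an inflection point, the divisor $3[I]$ is a hyperplane section on $E_{F'}$, so any three points $Q_1,Q_2,Q_3\in E(\overline F)$ cut out on $E$ by a line satisfy $[Q_1]+[Q_2]+[Q_3]\sim 3[I]$, i.e.\ $Q_1+Q_2+Q_3=0$ in the group law with $I$ as identity. In particular, if $Q_1,Q_2\in E(F)$, the line through them (or the tangent at $Q_1$ in the case $Q_1=Q_2$) is defined over $F$, so its residual third intersection with $E$ is an $F$-rational point equal to $-(Q_1+Q_2)$.

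I would then prove by induction on $|n|$ that $(3n+1)P\in E(F)$ for every $n\in\BZ$. The base case $n=0$ is the hypothesis $P\in E(F)$, and applying the tangent at $P$ gives $-2P\in E(F)$, which handles $n=-1$. For the inductive step, given that $(3n+1)P\in E(F)$ has already been constructed, the secant through $P$ and $(3n+1)P$ produces
\[
-\bigl(P+(3n+1)P\bigr)=(3(-n-1)+1)P\in E(F),
\]
while the secant through $-2P$ and $(3n+1)P$ produces
\[
-\bigl(-2P+(3n+1)P\bigr)=(3(-n)+1)P\in E(F).
\]
On the index $n\in\BZ$ these two operations act as $n\mapsto -n-1$ and $n\mapsto -n$, and their composition shifts $n$ by $\pm 1$; iterating them from $n=0$ reaches every integer, which finishes the induction.

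The only genuine subtlety will be verifying that the chord-and-tangent operation yields the group-law sum $-(Q_1+Q_2)$ even though $I$ is only defined over $F'$. This is automatic: the relation $[Q_1]+[Q_2]+[Q_3]\sim 3[I]$ is a linear equivalence of divisors on $E_{F'}$ that holds for purely geometric reasons (both sides are hyperplane sections), so the conclusion $Q_1+Q_2+Q_3=0$ in the group law based at $I$ needs no rationality hypothesis on $I$ itself. Everything else reduces to elementary arithmetic inside the coset $1+3\BZ\subset\BZ$.
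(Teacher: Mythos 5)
Your proof is correct, but it takes a genuinely different route from the paper's. The paper gives a one-line divisor-class argument: since $\deg H = 3$, the divisor $D := (3n+1)[P] - nH$ has degree $1$ and is defined over $F$, so by Riemann--Roch it is linearly equivalent to a unique $F$-rational point $Q$; the computation $(3n+1)([I]-[P]) = [I] - [Q]$ then identifies $Q$ with $(3n+1)P$. Your approach instead makes the geometry explicit via chords and tangents. The core observation is the same in both --- that three collinear points sum to zero in the group law based at an inflection point $I$, regardless of the field of definition of $I$, because $3[I]\sim H$ --- but you then run an induction on the index $n$ using the two operations $n\mapsto -n-1$ (secant/tangent through $P$) and $n\mapsto -n$ (secant through $-2P$), whose composites shift $n$ by $\pm 1$ and so reach all of $\BZ$ starting from $0$. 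Your version is more elementary and constructive, giving an explicit recipe for producing the new rational points by drawing lines; the paper's version is shorter and avoids the bookkeeping of the induction and the mild case distinctions (tangents versus secants, coincident points) that a fully careful write-up of your argument would need to address. Both are valid proofs of the lemma.
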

\begin{proof}
 Denoting by $H\in\Pic(E)$ the class of a hyperplane section and by $[\cdot]$ the class of a divisor modulo linear equivalence, we have that \[D:=(3n+1)[P]-nH\] has degree $1$, so there exists a point $Q\in E(F)$ with $[Q]=D$. Then:
 \[(3n+1)([I]-[P])=[I]+nH-(3n+1)[P]=[I]-[Q].\]
\end{proof}

\begin{remark}
Relating the proof in the last section to the rest of the article, it should be mentioned that there is no possibility of applying the method of using several elliptic fibrations to cases beyond K3. Only K3 and abelian surfaces can contain distinct elliptic fibrations with sections \cite[Lem. 12.18]{shioda}. In particular, the case of quintic $f$ is out of reach.
\end{remark}

\subsection*{Acknowledgements}
The author thanks A.~Skorobogatov, M.~Kuwata, P.~Satgé, M.~Ulas and the anonymous referee who suggested how to prove \Cref{cor:kn} without using Faltings' theorem. This work was supported by the Engineering and Physical Sciences Research Council [EP/ L015234/1], the EPSRC Centre for Doctoral Training in Geometry and Number Theory (The London School of Geometry and Number Theory), University College London. 

\printbibliography
\end{document}